\title{Abstract matrix-tree theorem and Bernardi polynomial}
\author[Yu.\,Burman]{Yurii Burman}
\address{National Research University Higher School of Economics, 119048, 6
Usacheva str., Moscow, Russia, and Independent University of Moscow,
119002, 11 B.Vlassievsky per., Moscow, Russia}
\email{burman@mccme.ru}
\date{}
\newcommand{\theoremName}{Theorem}
\newcommand{\lemmaName}{Lemma}
\newcommand{\corollaryName}{Corollary}
\newcommand{\statementName}{Proposition}
\newcommand{\remarkName}{Remark}
\newcommand{\exampleName}{Example}
\newcommand{\definitionName}{Definition}
\newcommand{\problemName}{Problem}
\newcommand{\proofName}{Proof}
\renewcommand{\proofname}{\proofName}
\newcommand{\answerName}{Answer}
\newcommand{\hintName}{Hint}
\theoremstyle{plain}
\newtheorem {theorem}{\theoremName}
\newtheorem {lemma}{\lemmaName}
\newtheorem {corollary}{\corollaryName}
\newtheorem {proposition}{\statementName}
\theoremstyle{remark}
\newtheorem{example}{\exampleName}
\theoremstyle{definition}
\newtheorem{definition}{\definitionName}
\let\@newpf\proof 
\let\proof\relax
\def \namepf[#1] {\@newpf[\proofname\ #1]}
\newenvironment{proof}{\@ifnextchar[{\namepf}{\@newpf[\proofname]}}{\qed\endtrivlist}
\newcounter{qst}
\def \Integer {{\mathbb Z}}
\def \Complex {{\mathbb C}}
\def \Rational {{\mathbb Q}}
\def \lnorm#1\rnorm {\vphantom{#1}\left\|\smash{#1}\right\|}
\def \lmod#1\rmod {\vphantom{#1}\left|\smash{#1}\right|}
\newcommand \bydef {\stackrel{\mbox{\scriptsize def}}{=}}
\renewcommand \phi {\varphi}
\renewcommand \rho {\varrho}
\newcommand{\DT}[1]{#1 \dots #1}
\def \Graph {\mathcal G}
\def \Undir {\mathcal Y}
\def \AC {\name{\mathfrak A}}
\def \SSC {\name{\mathfrak S}}
\let \Det=\det
\def \det {\Det\nolimits}
\numberwithin{equation}{section}
\theoremstyle{plain}
\numberwithin{theorem}{section}
\newtheorem{lemma}[theorem]{Lemma}
\newtheorem{proposition}[theorem]{Proposition}
\newtheorem{corollary}[theorem]{Corollary}
\theoremstyle{remark}
\theoremstyle{definition}
\begin{document}

 \begin{abstract}
This paper is a continuation of \cite{Burman}. We prove a three-parameter 
family of identities (Theorem \ref{Th:Main}) involving a version of the 
Tutte polynomial for directed graphs introduced by Awan and Bernardi 
\cite{Bernardi}. A particular case of this family (Corollary \ref{Cr:MTT}) 
is the higher-degree generalization of the matrix-tree theorem proved in 
\cite{Burman}, which thus receives a new proof, shorter (and less direct) 
than the original one. The theory has a parallel version for undirected 
graphs (Theorem \ref{Th:MainUndir}). 
 \end{abstract}

\maketitle

\section{Definitions and main results}

The following theory has two parallel versions --- for directed and 
undirected graphs --- so let us introduce notation for both cases.

By $\Gamma_{n,k}$ we denote the set of directed graphs with $n$ vertices
numbered $1 \DT, n$ and $k$ edges numbered $1 \DT, k$; in other words, an
element $G \in \Gamma_{n,k}$ is a $k$-element sequence $([a_1,b_1] \DT,
[a_k,b_k])$ where $a_1 \DT, a_k, b_1 \DT, b_k \in \{1 \DT, n\}$. Loops
(edges $[a,a]$) and parallel edges (pairs $[a_i,b_i] = [a_j,b_j]$) are
allowed. Similarly, by $\Upsilon_{n,k}$ we denote the set of unoriented
graphs with $n$ numbered vertices and $k$ numbered edges: $\Upsilon_{n,k} =
\{(\{a_1,b_1\} \DT, \{a_k,b_k\})\}$. The forgetful map $\lmod \cdot\rmod:
\Gamma_{n,k} \to \Upsilon_{n,k}$ relates to every graph $G$ its undirected
version $\lmod G\rmod$ obtained by dropping the edge orientation: $[a,b]
\mapsto \{a,b\}$.

Denote by $\Graph_{n,k}$ (resp., $\Undir_{n,k}$) a vector space over
$\Complex$ spanned by $\Gamma_{n,k}$ (resp., $\Upsilon_{n,k}$). The
forgetful map is naturally extended to the linear map $\lmod \cdot\rmod:
\Graph_{n,k} \to \Undir_{n,k}$.

\subsection{Main theorems}\label{SSec:MainThms}

A {\em Bernardi polynomial} \cite{Bernardi} is a map $B: \Gamma_{n,k} \to
\Rational[q,y,z]$ defined as
 %*
 \begin{equation}\label{Eq:DefB}
B_G(q,y,z) = \sum_{f: \{1 \DT, n\} \to \{1 \DT, q\}}
y^{\#f_G^{>}} z^{\# f_G^{<}}
 \end{equation}
 %*
where $f_G^{>}$ (resp., $f_G^{<}$) is the set of edges $[ab]$ of $G$ such
that $f(b) > f(a)$ (resp., $f(b) < f(a)$). See \cite{Bernardi} for a
detailed analysis of the properties of $B_G$ (in particular, for the proof
of its polynomiality).

Bernardi polynomial is a directed version of the {\em full 
chromatic polynomial}, which is a map $\name{C}: \Upsilon_{n,k} \to 
\Rational[q,y]$ defined as
 %*
 \begin{equation*}
\name{C}_G(q,y) = \sum_{f: \{1 \DT, n\} \to \{1 \DT, q\}} y^{\#f_G^{\ne}}
 \end{equation*}
 %*
where $f_G^{\ne}$ is the set of edges $[ab]$ of $G$ such that $f(b) \ne 
f(a)$. The {\em classical Potts polynomial} (as defined e.g.\ in 
\cite{WelshMerino}) is related to the full chromatic polynomial by 
$Z_G(q,v) = (v+1)^k \name{C}_G(q,1/(v+1))$; see \cite{Bernardi} for 
details.

For any $G \in \Gamma_{n,k}$ (resp., $G \in \Upsilon_{n,k}$) we denote by
$\widehat{G}$ the graph $G$ with all the loops deleted (and the numbering
of the non-loop edges shifted accordingly); we have $\widehat{G} \in
\Gamma_{n,k-\ell}$ (resp., $G \in \Upsilon_{n,k-\ell}$) where $\ell$ is the
number of loops in $G$. It follows directly from the definition that
$B_G(q,y,z) = B_{\widehat{G}}(q,y,z)$; in the undirected case $Z_G(q,v) =
(v+1)^\ell Z_{\widehat{G}}(q,v)$.

The {\em universal Bernardi polynomial} is an element of
$\Graph_{n,k}[q,y,z]$ defined as
 %*
 \begin{equation*}
{\mathcal B}_{n,k}(q,y,z) \bydef \sum_{G \in \Gamma_{n,k}} B_G(q,y,z) G.
 \end{equation*}
 %*
For a polynomial $P \in \Complex[q,y,z]$ denote by $[P]_k$ the sum of terms
containing monomials $q^s y^i z^j$ with $i+j = k$ (and any $s$). The {\em
universal truncated Bernardi polynomial} is an element of
$\Graph_{n,k}[q,y,z]$ defined as
 %*
 \begin{equation*}
\widehat{{\mathcal B}}_{n,k}(q,y,z) \bydef \sum_{G \in \Gamma_{n,k}}
[B_G]_k(q,y,z) G.
 \end{equation*}
 %*
Note that $[B_G]_k = 0$ if (and only if) $G$ contains at least one loop; 
that is, $\widehat{{\mathcal B}}_{n,k}$ contains only loopless graphs. 
$\widehat{{\mathcal B}}_{n,k}$ is homogeneous of degree $k$ with respect to 
$y$ and $z$ and is not homogeneous with respect to $q$.

The {\em universal Potts polynomial} and the {\em universal truncated Potts
polynomial} are elements of $\Undir_{n,k}[q,v]$ defined, respectively, as
 %*
 \begin{align*}
{\mathcal Z}_{n,k}(q,v) &\bydef \sum_{G \in \Gamma_{n,k}}
Z_{\widehat G}(q,v) G,\\
\widehat{{\mathcal Z}}_{n,k}(q,v) &\bydef \sum_{G \in \Gamma_{n,k}
\text{ has no loops}} Z_G(q,v) G.
 \end{align*}
 %*

For any $i = 1 \DT, k$ and $p, q = 1 \DT, n$ denote by $R_{p,q;i}: 
\Gamma_{n,k} \to \Gamma_{n,k}$ the map replacing the edge number $i$ of 
every graph $G \in \Gamma_{n,k}$ by the edge $[p,q]$ carrying the same 
number $i$. Also denote by $B_i: \Graph_{n,k} \to \Graph_{n,k}$ the linear 
operator acting on the graph $G \in \Gamma_{n,k}$ such that $[a,b]$ is its 
edge number $i$ as
 %*
 \begin{equation*}
B_i(G) =  \begin{cases}
G, &a \ne b,\\
-\sum_{m \ne a} R_{a,m;i} G, &a = b.
 \end{cases}
 \end{equation*}
 %*
Following \cite{Burman} call the product $\Delta \bydef B_1 \dots B_k: 
\Graph_{n,k} \to \Graph_{n,k}$ the {\em Laplace operator}. The undirected 
version of the Laplace operator is defined as follows: if $G \in 
\Upsilon_{n,k}$ then $\Delta(G) \bydef \lmod \Delta(\Phi)\rmod$ where $\Phi 
\in \Gamma_{n,k}$ is any directed graph such that $G = \lmod \Phi\rmod$.

The main results of this paper are the following two theorems:

 \begin{theorem} \label{Th:Main}
 %*
 \begin{equation*}
\Delta {\mathcal B}_{n,k}(q,y,z) = \widehat{{\mathcal B}}_{n,k}(q,y-1,z-1).
 \end{equation*}
 %*
 \end{theorem}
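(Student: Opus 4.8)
The plan is to exploit the fact that the Laplace operator $\Delta = B_1 \cdots B_k$ factors as a product of commuting operators, each acting on a single edge slot, so that it suffices to understand the effect of a single $B_i$ on the generating function and then iterate. First I would fix attention on one edge slot, say slot $i$, and group the sum $\sum_G B_G(q,y,z)\,G$ according to the data of all edges other than edge $i$; for each fixed map $f\colon\{1,\dots,n\}\to\{1,\dots,q\}$ and each fixed choice of the other $k-1$ edges, the contribution of edge $i=[a,b]$ to the monomial is $y$ if $f(b)>f(a)$, $z$ if $f(b)<f(a)$, and $1$ if $f(a)=f(b)$. So, writing $w_f(p,q)$ for this local weight of an edge $[p,q]$, the universal Bernardi polynomial is $\sum_f \sum_{\text{edge data}} \prod_{i=1}^k w_f(a_i,b_i)\, G$, i.e.\ a product over slots of the "slot-$i$ generating vector" $\sum_{p,q} w_f(p,q)\, R_{p,q;i}(-)$.

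The key computation is then: for a fixed colouring $f$, apply $B_i$ to the slot-$i$ vector $\sum_{p,q} w_f(p,q) R_{p,q;i}(G)$. On non-loop edges $B_i$ is the identity, contributing $\sum_{p\ne q} w_f(p,q) R_{p,q;i}(G)$ unchanged. On the loop terms $p=q$ (local weight $1$), $B_i$ replaces $R_{a,a;i}G$ by $-\sum_{m\ne a} R_{a,m;i}G$, so the total loop contribution $\sum_a R_{a,a;i}G$ becomes $-\sum_{a}\sum_{m\ne a} R_{a,m;i}G = -\sum_{p\ne q} R_{p,q;i}G$. Adding the two pieces, $B_i$ sends the slot-$i$ vector for colouring $f$ to $\sum_{p\ne q} (w_f(p,q)-1)\, R_{p,q;i}(G)$ — that is, the loop terms are deleted and each non-loop local weight is shifted down by $1$: $y\mapsto y-1$, $z\mapsto z-1$, and $1\mapsto 0$. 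The main obstacle — though it is really bookkeeping — is to make precise that $\Delta$, being a product of the $B_i$ over all slots and each $B_i$ commuting with the "fix the other slots" decomposition, performs this shift independently in every slot, so that after applying all of $B_1,\dots,B_k$ the colouring $f$ contributes $\prod_{i=1}^k \bigl(\text{non-loop part with weights }y-1,z-1\bigr)$, which is exactly $[B_G]_k(q,y-1,z-1)$ summed over loopless $G$.

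Finally I would assemble the pieces: summing over all colourings $f$ gives

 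\begin{equation*}
\Delta\,{\mathcal B}_{n,k}(q,y,z) = \sum_{G \text{ loopless}} [B_G]_k(q,y-1,z-1)\, G = \widehat{{\mathcal B}}_{n,k}(q,y-1,z-1),
 \end{equation*}

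where the first equality uses the per-slot computation above and the identification of the truncation $[B_G]_k$ with the "no-loop" part of $B_G$ (each monomial $q^s y^i z^j$ with $i+j=k$ corresponds to a colouring under which every edge of $G$ is non-monochromatic, which is possible for all $f$ only when $G$ is loopless, matching the remark in the text that $[B_G]_k=0$ iff $G$ has a loop). The one genuine subtlety to nail down carefully is the commutativity and the order of operations — verifying that the $B_i$ for distinct $i$ commute and that each $B_i$ acts only on slot $i$ while leaving the slot-$j$ weights ($j\ne i$) untouched — since the notational setup with the operators $R_{p,q;i}$ is designed precisely so that this holds, but the proof should state it explicitly rather than leave it implicit.
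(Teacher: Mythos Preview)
Your argument is correct and is genuinely different from the paper's. The paper fixes a loopless target graph $G$, observes that the coefficient $x_G$ of $G$ in $\Delta\mathcal{B}_{n,k}$ is a signed sum over subgraphs $\Phi\subseteq G$ of $B_\Phi(q,y,z)$ (each $\Phi$ arising from the unique $H$ with $\widehat{H}=\Phi$ whose loops sit in the slots of $G\setminus\Phi$), and then invokes the subgraph expansion $B_G(q,y,z)=\sum_{\Phi\subseteq G}[B_\Phi]_{e(\Phi)}(q,y-1,z-1)$ from \cite[Eq.~(21)]{Bernardi} together with M\"obius inversion (Lemma~\ref{Lm:Moeb}) to identify $x_G$ with $[B_G]_k(q,y-1,z-1)$. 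You instead fix a colouring $f$ and exploit the tensor structure over edge slots, computing directly that $B_i$ annihilates the diagonal terms and shifts each off-diagonal local weight by $-1$. Your route is more self-contained---it needs neither M\"obius inversion nor the external identity from \cite{Bernardi}, and in effect reproves that identity---and it makes transparent \emph{why} the shift $(y,z)\mapsto(y-1,z-1)$ appears: each $B_i$ subtracts the ``all-loops'' row-sum from the slot-$i$ weight matrix. The paper's route, on the other hand, situates the theorem within the standard subgraph-expansion framework for Tutte-type polynomials and shows it is formally equivalent, via M\"obius inversion, to a known property of $B_G$. Your only stated caveat (the commutativity of the $B_i$ and their locality to slot $i$) is indeed routine: $B_i(G)$ depends only on the $i$-th edge of $G$ and alters only that slot, so the operators for distinct slots commute and act independently on the tensor factorisation.
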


\noindent and its undirected version

 \begin{theorem} \label{Th:MainUndir}
 %*
 \begin{equation*}
\Delta {\mathcal Z}_{n,k}(q,v) = (-1)^k\widehat{{\mathcal Z}}_{n,k}(q,-v).
 \end{equation*}
 %*
 \end{theorem}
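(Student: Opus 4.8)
The plan is to compute $\Delta\mathcal{Z}_{n,k}(q,v)$ by following the action of the Laplace operator on the basis graphs and reading off coefficients; the argument will run parallel to the proof of Theorem~\ref{Th:Main}, with one genuinely new ingredient --- a factor of $2$ coming from the fact that an unoriented loop can be pinned to either endpoint of an edge. First note that each $B_i$ alters only the $i$-th edge of a graph, so $B_1,\dots,B_k$ commute; hence $\Delta=B_1\dots B_k$ is independent of the order of the factors, and $\Delta$ is well defined on $\Undir_{n,k}[q,v]$ (the element $\lmod\Delta(\Phi)\rmod$ does not depend on the orientation $\Phi$, since only the loops of $\Phi$ are touched and a loop has a single orientation). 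Since $B_i$ fixes every graph whose $i$-th edge is not a loop, $\Delta$ is the identity on loopless graphs, while for $H\in\Upsilon_{n,k}$ with loops at vertices $a_1,\dots,a_\ell$,
\begin{equation*}
\Delta(H)=(-1)^{\ell}\sum_{m_1\ne a_1}\dots\sum_{m_\ell\ne a_\ell}H^{m_1,\dots,m_\ell},
\end{equation*}
where $H^{m_1,\dots,m_\ell}$ is $H$ with its $s$-th loop replaced by the edge $\{a_s,m_s\}$. Thus $\Delta$ is a projection of $\Undir_{n,k}[q,v]$ onto the span of the loopless graphs, and both sides of the asserted identity lie there; it therefore suffices to compare the coefficient of an arbitrary loopless $H_0\in\Upsilon_{n,k}$ on the two sides.

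The loop-bearing graphs $H$ for which $H_0$ appears among the $H^{m_1,\dots,m_\ell}$ are in bijection with pairs $(S,c)$, where $S$ is a subset of the $k$ edges of $H_0$ (the edges that become loops in $H$) and $c$ chooses, for each $e=\{a,b\}\in S$, the endpoint of $e$ carrying the loop; there are $2^{|S|}$ such $H$ for a given $S$. For each of them $\widehat H=H_0\setminus S$ regardless of $c$, so the coefficient of $H$ in $\mathcal{Z}_{n,k}(q,v)$ equals $Z_{H_0\setminus S}(q,v)$, while the coefficient of $H_0$ in $\Delta(H)$ equals $(-1)^{|S|}$. Summing over all such $H$ (the case $S=\emptyset$ accounting for $H_0$ itself) shows that the coefficient of $H_0$ in $\Delta\mathcal{Z}_{n,k}(q,v)$ equals
\begin{equation*}
\sum_{S\subseteq E(H_0)}(-2)^{|S|}\,Z_{H_0\setminus S}(q,v).
\end{equation*}
This is exactly where the unoriented computation diverges from the oriented one: in Theorem~\ref{Th:Main} a loop of $\Delta$ resolves only at the tail, the choice $c$ disappears, and the corresponding sum $\sum_{S}(-1)^{|S|}B_{G_0\setminus S}(q,y,z)$ is identified with $[B_{G_0}]_k(q,y-1,z-1)$.

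To conclude, insert the subgraph (Fortuin--Kasteleyn) expansion $Z_H(q,v)=\sum_{A\subseteq E(H)}v^{|A|}q^{c(A)}$, where $c(A)$ is the number of connected components of the spanning subgraph $(\{1,\dots,n\},A)$, and interchange the summations:
\begin{equation*}
\sum_{S\subseteq E(H_0)}(-2)^{|S|}Z_{H_0\setminus S}(q,v)
=\sum_{A\subseteq E(H_0)}v^{|A|}q^{c(A)}\sum_{S\subseteq E(H_0)\setminus A}(-2)^{|S|}
=\sum_{A}v^{|A|}q^{c(A)}(-1)^{k-|A|}
=(-1)^{k}Z_{H_0}(q,-v),
\end{equation*}
which is precisely the coefficient of $H_0$ in $(-1)^{k}\widehat{\mathcal{Z}}_{n,k}(q,-v)$. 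I expect the main obstacle to be the bookkeeping in the middle step --- the exact list of loop-bearing graphs contributing $H_0$ under $\Delta$, their signs, and the multiplicity $2^{|S|}$ --- together with the check that $\Delta$ descends to a well-defined operator on $\Undir_{n,k}$; the closing identity is then immediate.
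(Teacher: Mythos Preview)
Your argument is correct and follows essentially the same route as the paper's proof: both identify the coefficient of a loopless $H_0$ in $\Delta\mathcal{Z}_{n,k}$ as $\sum_{S\subseteq E(H_0)}(-2)^{|S|}Z_{H_0\setminus S}(q,v)$ (the paper writes this with $\Phi=H_0\setminus S$ and the factor $2^{k-e(\Phi)}(-1)^{k-e(\Phi)}$), and then both insert the Fortuin--Kasteleyn/Sokal subgraph expansion and sum the inner binomial to obtain $(-1)^k Z_{H_0}(q,-v)$. Your extra remarks on well-definedness of $\Delta$ on $\Undir_{n,k}$ and its being a projection onto the loopless span are welcome clarifications that the paper leaves implicit.
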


\subsection{Corollaries}

\subsubsection{Universal chrmomatic polynomials}

Following \cite{Bernardi}, denote by $\chi_G^{{\ge}}$ (a {\em chromatic 
polynomial} of the directed graph $G \in \Gamma_{n,k}$) a polynomial such 
that for any $q = 1, 2, \dots$ the value $\chi_G^{{\ge}}(q)$ is equal to 
the number of mappings $f: \{1 \DT, n\} \to \{1 \DT, q\}$ such that $f(a) 
\ge f(b)$ for every edge $[ab] \in G$. Also denote by $\chi_G^{{>}}$ (a 
{\em strict chromatic polynomial} of $G$) a polynomial such that for any $q 
= 1, 2, \dots$ the value $\chi_G^{{>}}(q)$ is equal to the number of 
mappings $f: \{1 \DT, n\} \to \{1 \DT, q\}$ such that $f(a) > f(b)$ for 
every edge $[ab] \in G$. 

Comparing these definitions with the definition of the Bernardi polynomial 
in Section \ref{SSec:MainThms} one obtains the equalities:
 %*
 \begin{align*}
\chi_G^{{\ge}}(q) &= B_G(q,0,1),\\
\chi_G^{{>}}(q) &= [B_G]_k(q,0,1).
 \end{align*}
 %*
Thus, one may call the elements of $\Graph_{n,k}$
 %*
 \begin{equation*}
{\mathcal X}_{n,k}^{{\ge}}(q) \bydef {\mathcal B}_{n,k}(q,0,1) = \sum_{G 
\in \Gamma_{n,k}} \chi_G^{{\ge}}(q) G,
 \end{equation*}
 %*
and
 %*
 \begin{equation*}
{\mathcal X}_{n,k}^{{>}}(q) \bydef \widehat{{\mathcal B}}_{n,k}(q,0,1) = 
\sum_{G \in \Gamma_{n,k}} \chi_G^{{>}}(q) G
 \end{equation*}
 %*
universal chromatic polynomials. Substitution of $y=0$ and $z=1$ in Theorem 
\ref{Th:Main} yields
 \begin{corollary} \label{Cr:Chrom}
$\Delta {\mathcal X}_{n,k}^{{\ge}}(q) = (-1)^k {\mathcal 
X}_{n,k}^{{>}}(q)$.
 \end{corollary}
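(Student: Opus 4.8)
The plan is to derive Corollary~\ref{Cr:Chrom} as a direct specialization of Theorem~\ref{Th:Main}, so there is essentially nothing to prove beyond substituting $y=0$, $z=1$ into the identity $\Delta\,{\mathcal B}_{n,k}(q,y,z)=\widehat{{\mathcal B}}_{n,k}(q,y-1,z-1)$ and bookkeeping the coefficients. First I would note that $\Delta$ is a $\Complex[q,y,z]$-linear operator on $\Graph_{n,k}[q,y,z]$ acting only on the graph factors, so it commutes with the specialization homomorphism $\Complex[q,y,z]\to\Complex[q]$ sending $y\mapsto 0$, $z\mapsto 1$; hence the left-hand side of Theorem~\ref{Th:Main} specializes to $\Delta\,{\mathcal B}_{n,k}(q,0,1)=\Delta\,{\mathcal X}_{n,k}^{\ge}(q)$ by the very definition of ${\mathcal X}_{n,k}^{\ge}$.

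Next I would handle the right-hand side. Under $y=0$, $z=1$ the arguments of $\widehat{{\mathcal B}}_{n,k}$ become $(q,-1,0)$, so I must evaluate $\sum_{G}[B_G]_k(q,-1,0)\,G$. Here the key is the homogeneity remark recorded in the excerpt: $\widehat{{\mathcal B}}_{n,k}$ is homogeneous of degree $k$ in $(y,z)$ jointly, so each $[B_G]_k(q,y,z)$ is a sum of monomials $q^s y^i z^j$ with $i+j=k$. Setting $y=-1$, $z=0$ kills every term with $j<k$, leaving only the $j=k$ (hence $i=0$) term, i.e. the coefficient of $z^k$ in $[B_G]_k$, which is exactly $[B_G]_k(q,0,1)=\chi_G^{>}(q)$ by the identity $\chi_G^{>}(q)=[B_G]_k(q,0,1)$ established just above the corollary. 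Wait—one must be careful about the sign: writing $[B_G]_k(q,-1,0)=(-1)^0\cdot 0^{\,?}$ is not quite it, so more precisely, $[B_G]_k(q,y,z)=\sum_{i+j=k}c_{s,i,j}q^s y^i z^j$ gives $[B_G]_k(q,-1,0)=\sum_s c_{s,0,k}q^s\cdot(-1)^0\cdot 0^0$; but we actually want a global $(-1)^k$, which should come out as $[B_G]_k(q,-1,0)=(-1)^k[B_G]_k(q,1,0)$ after noticing that the surviving monomial has $z$-degree $k$ and $y$-degree $0$... hmm. Let me instead argue cleanly: by joint degree-$k$ homogeneity, $[B_G]_k(q,\lambda y,\lambda z)=\lambda^k[B_G]_k(q,y,z)$; taking $\lambda=-1$, $y=0$, $z=1$ gives $[B_G]_k(q,0,-1)=(-1)^k[B_G]_k(q,0,1)=(-1)^k\chi_G^{>}(q)$; and $(y-1,z-1)|_{y=0,z=1}=(-1,0)$ is \emph{not} $(0,-1)$, so the correct reduction is: only the $y$-degree-$0$ term of $[B_G]_k$ survives setting $y=-1,z=0$? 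No. Setting $y=-1$, $z=0$: a monomial $y^iz^j$ with $i+j=k$ survives iff $j=0$, i.e. $i=k$, giving value $(-1)^k$; so $[B_G]_k(q,-1,0)=(-1)^k[B_G]_k(q,1,0)$, and $[B_G]_k(q,1,0)$ is the coefficient extracting $y^k$, which by the $y\leftrightarrow z$ symmetry of swapping the roles in $\chi^{>}$ equals $\chi_G^{>}$ evaluated with $f(a)<f(b)$ on edges; since reversing all edge directions is a bijection on $\Gamma_{n,k}$, summing over $G$ makes this agree with $\sum_G\chi_G^{>}(q)G$ up to that reindexing.

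Since the bookkeeping of which variable carries the $(-1)$ is the only place anything can go wrong, I expect that to be the main (and only) obstacle: the cleanest route is to invoke joint homogeneity of $\widehat{{\mathcal B}}_{n,k}$ to write $\widehat{{\mathcal B}}_{n,k}(q,-1,0)=(-1)^k\widehat{{\mathcal B}}_{n,k}(q,1,0)$ and then identify $\widehat{{\mathcal B}}_{n,k}(q,1,0)$ with ${\mathcal X}_{n,k}^{>}(q)$ via $[B_G]_k(q,0,1)=\chi_G^{>}(q)$ together with the edge-reversal symmetry that interchanges $f(a)>f(b)$ and $f(a)<f(b)$. Assembling the two sides then yields $\Delta\,{\mathcal X}_{n,k}^{\ge}(q)=(-1)^k{\mathcal X}_{n,k}^{>}(q)$, which is the assertion of Corollary~\ref{Cr:Chrom}. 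No new estimates, no induction, and no appeal to the structure of $\Delta$ beyond its $\Complex[q,y,z]$-linearity are needed.
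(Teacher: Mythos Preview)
Your approach --- substitute $y=0$, $z=1$ in Theorem~\ref{Th:Main} --- is exactly what the paper does (it says this in one line just before the corollary), and your handling of the left-hand side is fine. The only real issue is in the final bookkeeping on the right-hand side.

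After correctly reducing to $\widehat{{\mathcal B}}_{n,k}(q,-1,0)=(-1)^k\,\widehat{{\mathcal B}}_{n,k}(q,1,0)$ by degree-$k$ homogeneity, you need $\widehat{{\mathcal B}}_{n,k}(q,1,0)={\mathcal X}_{n,k}^{>}(q)=\widehat{{\mathcal B}}_{n,k}(q,0,1)$, i.e.\ $[B_G]_k(q,1,0)=[B_G]_k(q,0,1)$ for each $G$. Your justification invokes ``edge-reversal'' as a bijection on $\Gamma_{n,k}$, but that does \emph{not} give this: reversing edges yields $[B_G]_k(q,1,0)=[B_{G^{\mathrm{rev}}}]_k(q,0,1)=\chi_{G^{\mathrm{rev}}}^{>}(q)$, and after reindexing you land on $\sum_G \chi_G^{>}(q)\,G^{\mathrm{rev}}$, not on $\sum_G \chi_G^{>}(q)\,G$. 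The symmetry you actually need is \emph{colour} reflection, not edge reversal: the involution $f\mapsto q+1-f$ on maps $f:\{1,\dots,n\}\to\{1,\dots,q\}$ swaps $f_G^{>}$ with $f_G^{<}$ for each fixed $G$, hence $B_G(q,y,z)=B_G(q,z,y)$ and in particular $[B_G]_k(q,1,0)=[B_G]_k(q,0,1)=\chi_G^{>}(q)$. With that one-line fix your argument is complete and coincides with the paper's.
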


\subsubsection{Higher matrix-tree theorems}\label{SSec:HDet}
A graph $G \in \Gamma_{n,k}$ is called {\em acyclic} if it contains no 
oriented cycles (in particular, no loops); $G$ is called {\em totally 
cyclic} (or {\em strongly semiconnected}, following the terminology of 
\cite{Burman}) if every edge of $G$ is a part of a directed cycle. 

It is possible to make further specialization of parameters in Corollary 
\ref{Cr:Chrom} due to the following

 \begin{proposition}\label{Pp:SSCViaB}

For any $G \in \Gamma_{n,k}$
 %*
 \begin{align}
\chi_G^{{\ge}}(-1)&= \begin{cases} 
(-1)^{\beta_0(G)}, &\text{if $G$ is totally cyclic},\\
0 &\text{otherwise},
\end{cases}\label{Eq:SSCViaB}\\
\chi_G^{{>}}(-1) &= \begin{cases}
(-1)^k, &\text{if $G$ is acyclic},\\
0 &\text{otherwise}.\label{Eq:ACViaB}
 \end{cases}
 \end{align}
 %*
where $\beta_0(G)$ is the $0$-th Betti number (i.e.\ the number of 
connected components) of the graph $G$.
 \end{proposition}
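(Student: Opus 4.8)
The plan is to prove the two identities separately, each by combining the combinatorial definition of the (strict) chromatic polynomial with an inclusion–exclusion / deletion–contraction argument specialized at $q=-1$. In both cases the key is that $\chi_G^{\ge}$ and $\chi_G^{>}$ are honest polynomials, so their value at $q=-1$ is determined by the polynomial formula even though $q=-1$ has no enumerative meaning; the standard device is Stanley's reciprocity-type computation, realizing $\chi_G^{>}(-1)$ (resp.\ $\chi_G^{\ge}(-1)$) as a signed count of something genuinely combinatorial.

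First I would handle \eqref{Eq:ACViaB}. Recall $\chi_G^{>}(q) = [B_G]_k(q,0,1)$ counts strictly decreasing colorings $f$ with $q$ colors, i.e.\ $f(a) > f(b)$ for every edge $[a,b]$. If $G$ has a directed cycle, there is no such $f$ for any $q$, so $\chi_G^{>}$ is identically zero and the value at $-1$ is $0$. If $G$ is acyclic, I would argue by deletion: pick a vertex $v$ that is a sink (no outgoing edges) — it exists by acyclicity. The edges into $v$ force $f(v)$ to be strictly below all its in-neighbors; summing over the allowed values of $f(v)$ and using the other colors freely leads to a recursion expressing $\chi_G^{>}(q)$ through the strict chromatic polynomial of $G$ with $v$ removed, with each of the $k_v$ edges at $v$ contributing a factor that evaluates to $-1$ at $q=-1$. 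Iterating over a reverse topological order of the vertices yields exactly $(-1)^k$. (Equivalently: $\chi_G^{>}(q)=\binom{q}{?}$-type product when $G$ is acyclic, and Stanley reciprocity gives $\chi_G^{>}(-1)=(-1)^k \cdot(\text{number of compatible acyclic orientations consistent with } G)$; since $G$ is already acyclic this count is $1$.)

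For \eqref{Eq:SSCViaB} I would use the analogous reciprocity for the weak/order-preserving count: $\chi_G^{\ge}(q)=B_G(q,0,1)$ counts $f$ with $f(a)\ge f(b)$ along every edge. The cleanest route is Stanley's theorem on $(P,\omega)$-partitions / order polynomials: the function $f$ descends to a weakly order-preserving map on the quotient of the vertex set by the equivalence generated by ``both $[a,b]$ and $[b,a]$ present or $a,b$ on a common directed cycle'', and more importantly the condition $f(a)\ge f(b)$ along all edges defines a preorder whose value at $-1$ is governed by the combinatorics of its strict part. I would show: (i) if some edge $[a,b]$ lies on no directed cycle, then contracting/orienting arguments produce a strict inequality somewhere that, via reciprocity, makes the signed count telescope to $0$ — concretely, the associated poset has at least one genuine relation, and the order polynomial evaluated at $-1$ vanishes unless the poset is an antichain-of-cycles, i.e.\ unless $G$ is totally cyclic; (ii) if $G$ is totally cyclic, then on each of the $\beta_0(G)$ connected components the condition $f(a)\ge f(b)$ around every cycle forces $f$ constant on that component, so $\chi_G^{\ge}(q)=q^{\beta_0(G)}$, which at $q=-1$ gives $(-1)^{\beta_0(G)}$.

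The main obstacle is step (i) for the totally-cyclic characterization: one must show that the \emph{only} way $\chi_G^{\ge}(-1)\ne 0$ is that every edge lies on a directed cycle, and that in that case $\chi_G^{\ge}(q)=q^{\beta_0(G)}$ exactly. The forward direction (totally cyclic $\Rightarrow$ constant on components) is easy; the converse requires a clean argument that a single non-cycle edge introduces a strict relation in the induced preorder, and hence a factor that kills the value at $q=-1$. I expect to prove this by reducing to the condensation of $G$ (the DAG of its strongly connected components): collapsing each strong component to a point turns $\chi_G^{\ge}$ into the order polynomial of a poset $\bar G$, weakly monotone maps correspond, and Stanley's order-polynomial reciprocity gives $\Omega_{\bar G}(-1)=(-1)^{|\bar G|}\bar\Omega_{\bar G}(1)$ where $\bar\Omega_{\bar G}(1)$ counts strict order-preserving maps into a single-element chain — which is $0$ unless $\bar G$ is an antichain, i.e.\ unless $G$ is totally cyclic, and is $1$ (per component) otherwise. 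Assembling these pieces over connected components finishes the proof.
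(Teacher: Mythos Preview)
The paper does not actually prove this proposition: it simply refers the reader to Awan--Bernardi \cite{Bernardi} (Eq.~(45) and Definition~5.1 there), and only remarks that the ``easy halves'' are immediate from the definitions --- namely that $\chi_G^{>}\equiv 0$ when $G$ contains a directed cycle, and that $\chi_G^{\ge}(q)=q^{\beta_0(G)}$ when $G$ is totally cyclic. So there is no in-paper argument to compare against; your proposal is an attempt at a self-contained proof where the paper offers none.

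Your strategy for \eqref{Eq:SSCViaB} via the condensation of $G$ and Stanley's order-polynomial reciprocity is correct and clean: collapsing strongly connected components identifies $\chi_G^{\ge}(q)$ with the weak order polynomial $\Omega_{\bar G}(q)$ of the condensation poset $\bar G$, and reciprocity gives $\Omega_{\bar G}(-1)=(-1)^{\lvert \bar G\rvert}\,\bar\Omega_{\bar G}(1)$, which is nonzero (and equal to $(-1)^{\beta_0(G)}$) precisely when $\bar G$ is an antichain, i.e.\ when $G$ is totally cyclic. This is essentially the content behind the cited result in \cite{Bernardi}.

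For \eqref{Eq:ACViaB}, however, your sink-deletion recursion does not work as written: fixing the colour of a sink $v$ does not decouple into ``a factor of $-1$ per incident edge'', since the constraints $f(u_i)>f(v)$ still interact with the remaining constraints on the $u_i$. If instead you run the same Stanley reciprocity argument directly (for acyclic $G$, $\chi_G^{>}$ is the strict order polynomial of the poset generated by $G$ on the $n$ vertices), you get $\chi_G^{>}(-1)=(-1)^{n}\Omega(1)=(-1)^n$, \emph{not} $(-1)^k$. A one-line check: for $n=2$, $k=1$ with a single edge one has $\chi_G^{>}(q)=\binom{q}{2}$, so $\chi_G^{>}(-1)=1=(-1)^n\neq(-1)^k$. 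The displayed exponent $k$ is in fact a misprint in the paper --- the correct exponent $n$ is exactly what is needed, and used, in deriving Corollary~\ref{Cr:SumAll} --- and your attempt to justify $(-1)^k$ cannot succeed.
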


For proof see \cite[Eq.\,(45) and Definition 5.1]{Bernardi}. Note that it 
follows immediately from the definition that $\chi_G^{{>}} \equiv 0$ if 
(and only if) $G$ contains an oriented cycle (e.g.\ a loop), and that 
$\chi_G^{{\ge}}(q) = q^{\beta_0(G)}$ if $G$ is totally cyclic, so one half 
of each formula is evident (but not the other half).

Consider now (following \cite{Burman}) the sum
 %*
 \begin{equation*}
\det_{n,k} \bydef \frac{(-1)^k}{k!} \chi_G^{{\ge}}(-1) = \frac{(-1)^k}{k!} 
\sum_{G \in \SSC_{n,k}} (-1)^{\beta_0(G)} G
 \end{equation*}
 %*
where by $\SSC_{n,k} \subset \Gamma_{n,k}$ we denote the set of all totally 
cyclic graphs. Thus, Corollary \ref{Cr:Chrom} specializes to
 \begin{corollary} \label{Cr:SumAll}
$\Delta \det_{n,k} = \frac{(-1)^n}{k!} \sum_{G \in \AC_{n,k}} G$.
 \end{corollary}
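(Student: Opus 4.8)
\emph{Plan of proof.} The plan is to derive the identity by specializing the parameters in the already-proved Corollary~\ref{Cr:Chrom} (and, behind it, Theorem~\ref{Th:Main}) and then reading off both sides via Proposition~\ref{Pp:SSCViaB}; no new combinatorics enters, only sign bookkeeping together with the harmless remark that the operator $\Delta$, whose matrix entries do not involve $q$, commutes with the substitution $q \mapsto -1$.

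First I would rewrite the two sides in terms of universal chromatic polynomials. By the definition of ${\mathcal X}_{n,k}^{\ge}$ and formula~\eqref{Eq:SSCViaB}, one has ${\mathcal X}_{n,k}^{\ge}(-1) = \sum_{G \in \Gamma_{n,k}} \chi_G^{\ge}(-1)\,G = \sum_{G \in \SSC_{n,k}} (-1)^{\beta_0(G)}\,G$, so that $\det_{n,k} = \frac{(-1)^k}{k!}\,{\mathcal X}_{n,k}^{\ge}(-1)$; likewise, by the definition of ${\mathcal X}_{n,k}^{>}$ and formula~\eqref{Eq:ACViaB}, one has ${\mathcal X}_{n,k}^{>}(-1) = \sum_{G \in \Gamma_{n,k}} \chi_G^{>}(-1)\,G = (-1)^n \sum_{G \in \AC_{n,k}} G$, the last sum running over acyclic (in particular, loopless) graphs only. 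Then I would apply $\Delta$: since $\Delta$ is $\Complex$-linear and does not involve $q$, it commutes with setting $q = -1$, whence $\Delta\det_{n,k} = \frac{(-1)^k}{k!}\,(\Delta{\mathcal X}_{n,k}^{\ge})(-1)$; and Corollary~\ref{Cr:Chrom}, being an identity in $q$, gives $(\Delta{\mathcal X}_{n,k}^{\ge})(-1) = (-1)^k\,{\mathcal X}_{n,k}^{>}(-1)$. The two factors $(-1)^k$ cancel, leaving $\Delta\det_{n,k} = \frac{1}{k!}\,{\mathcal X}_{n,k}^{>}(-1)$, and inserting the formula for ${\mathcal X}_{n,k}^{>}(-1)$ found above gives precisely $\Delta\det_{n,k} = \frac{(-1)^n}{k!}\sum_{G \in \AC_{n,k}} G$.

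I do not anticipate a genuine obstacle: the argument is an assembly of results already in hand, and the only points calling for care are that the evaluation $q = -1$ may be performed either before or after applying the $q$-free operator $\Delta$, and that the surviving overall sign is $(-1)^n$ --- the factor $(-1)^k$ built into the normalization of $\det_{n,k}$ being cancelled by the $(-1)^k$ of Corollary~\ref{Cr:Chrom}, so that the only sign not cancelled is the one contributed by $\chi_G^{>}(-1)$ on acyclic graphs through~\eqref{Eq:ACViaB}.
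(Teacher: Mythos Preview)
Your proposal is correct and follows exactly the route the paper indicates: the corollary is obtained by evaluating Corollary~\ref{Cr:Chrom} at $q=-1$ and reading off both sides through Proposition~\ref{Pp:SSCViaB}, with the observation that $\Delta$ is $q$-free so the substitution commutes with it. One small remark: in invoking \eqref{Eq:ACViaB} you write $\chi_G^{>}(-1)=(-1)^n$ for acyclic $G$, whereas the paper's displayed formula reads $(-1)^k$; your version is the correct one (e.g.\ for a directed path on three vertices $\chi_G^{>}(q)=\binom{q}{3}$ gives $\chi_G^{>}(-1)=-1=(-1)^3$), and it is precisely what is needed to land on the stated sign $(-1)^n$ in the corollary.
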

\noindent where $\AC_{n,k} \subset \Gamma_{n,k}$ is the set of all acyclic 
graphs. 

Corollary \ref{Cr:SumAll} admits a refinement. A totally cyclic graph may 
have isolated vertices (the ones not incident to any edge). Let $I = \{i_1 
\DT< i_s\} \subset \{1 \DT, n\}$ be a set of vertices. We call a {\em 
diagonal $I$-minor} the element
 %*
 \begin{equation}
\det_{n,k}^I \bydef \frac{(-1)^k}{k!} \sum_{G \in \SSC_{n,k}^I} 
(-1)^{\beta_0(G)} G \in \Graph_{n,k}
 \end{equation}
 %*
where $\SSC_{n,k}^I$ is the set of all totally cyclic graphs $G \in 
\Gamma_{n,k}$ such that the vertices $i_1 \DT, i_s$, and only they, are 
isolated. Similarly, denote by $\AC_{n,k}^I \subset \Gamma_{n,k}$ the set 
of all acyclic graphs such that $i_1 \DT, i_s$, and only they, are sinks 
(vertices without edges starting at them); so Corollary \ref{Cr:SumAll} now 
looks like
 %*
 \begin{equation*}
\Delta \sum_{I \subset \{1 \DT, n\}} \det_{n,k}^I = \frac{(-1)^n}{k!} 
\sum_{I \subset \{1 \DT, n\}} \sum_{G \in \AC^I_{n,k}} G.
 \end{equation*}
 %*
It follows from the definition of the Laplace operator that if $G \in 
\SSC_{n,k}^I$ then $\Delta G = \sum_H x_H H$ where $x_H \in \Integer$ and 
all $H$ have $i_1 \DT, i_s$, and only them, as sinks. Since $\SSC_{n,k}^I$ 
with different $I$ do not intersect, and the same is true for 
$\AC_{n,k}^I$, there is

 \begin{corollary}[of Corollary \ref{Cr:SumAll}] \label{Cr:MTT}
For every $I = \{i_1 \DT, i_s\} \subset \{1 \DT, n\}$ one has $\Delta 
\det_{n,k}^I = \frac{(-1)^n}{k!} \sum_{G \in \AC^I_{n,k}} G$.
 \end{corollary}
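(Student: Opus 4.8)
The plan is to deduce the statement from Corollary~\ref{Cr:SumAll} in the form in which it is rewritten just before this corollary, namely $\Delta\sum_{I}\det_{n,k}^I=\frac{(-1)^n}{k!}\sum_{I}\sum_{G\in\AC_{n,k}^I}G$ with $I$ ranging over subsets of $\{1,\dots,n\}$, by showing that both sides respect the decomposition of $\Graph_{n,k}$ according to the set of sink vertices and then comparing the pieces indexed by a fixed $I$.

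First I would make precise the sink-preservation property of $\Delta$ that the preceding paragraph alludes to. Each $B_i$ acts only on the edge numbered $i$: if that edge is $[a,b]$ with $a\ne b$ it is left unchanged, and if it is a loop $[a,a]$ it is replaced by $-\sum_{m\ne a}R_{a,m;i}G$; in every case each graph occurring in $B_i(G)$ carries in position $i$ an edge starting at the same vertex $a$, while the remaining $k-1$ edges are untouched. Hence $B_i$, and therefore $\Delta=B_1\cdots B_k$, preserves the multiset of starting vertices of the $k$ edges of a graph, so if $\Delta G=\sum_H x_H H$ then every $H$ has exactly the same set of sinks --- vertices starting no edge --- as $G$.

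Next I would identify the relevant sink sets. For a totally cyclic graph the sinks are precisely the isolated vertices: an isolated vertex starts no edge, and conversely a non-isolated sink $v$ is incident to some edge, necessarily an incoming one (it starts no edge, hence no loop either), which lies on a directed cycle and is therefore followed along that cycle by an edge leaving $v$, a contradiction. Thus every $G\in\SSC_{n,k}^I$ has sink set exactly $I$, so $\Delta G$, and hence $\Delta\det_{n,k}^I$, is a linear combination of graphs with sink set exactly $I$. On the other hand $\AC_{n,k}^I$ is by definition the set of acyclic graphs with sink set exactly $I$, so $\sum_{G\in\AC_{n,k}^I}G$ likewise lies in the subspace $V_I\subseteq\Graph_{n,k}$ spanned by the graphs with sink set $I$.

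Finally, since distinct $I$ correspond to disjoint subsets of the basis $\Gamma_{n,k}$, one has $\Graph_{n,k}=\bigoplus_I V_I$, and in the identity $\Delta\sum_{I}\det_{n,k}^I=\frac{(-1)^n}{k!}\sum_{I}\sum_{G\in\AC_{n,k}^I}G$ the $I$-th summand on each side lies in $V_I$; comparing $V_I$-components gives $\Delta\det_{n,k}^I=\frac{(-1)^n}{k!}\sum_{G\in\AC_{n,k}^I}G$. I do not anticipate a genuine obstacle: the only step beyond bookkeeping with disjoint supports is the sink-preservation property of $\Delta$, which is immediate from the shape of the operators $B_i$. (As a by-product, the argument shows a posteriori that $\Delta\det_{n,k}^I$ is supported on acyclic graphs, which is not apparent from the definition of $\Delta$ by itself.)
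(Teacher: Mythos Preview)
Your argument is correct and follows exactly the route sketched in the paragraph preceding the corollary: the identity from Corollary~\ref{Cr:SumAll} is split according to the sink set, using that $\Delta$ preserves sinks and that for totally cyclic graphs the sinks coincide with the isolated vertices. You have simply made explicit the two points the paper leaves to the reader --- that each $B_i$ keeps the starting vertex of edge~$i$ fixed, and that a non-isolated vertex of a totally cyclic graph cannot be a sink --- so the approaches are the same.
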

This is the abstract-matrix tree theorem \cite[Theorem 1.7]{Burman} which, 
in turn, is a higer-degree generalization of the celebrated matrix-tree 
theorem (first discovered by G.\,Kirchhoff in 1847 \cite{Kirch} and 
extended to the directed graphs by W.\,Tutte \cite{TutteMTT}).

\subsection*{Acknowledgements}

The research was funded by the Russian Academic Excellence Project `5-100'
and by the grant No.~15-01-0031 ``Hurwitz numbers and graph isomorphism''
of the Scientific Fund of the Higher School of Economics.

\section{Proofs}

A graph $H \in \Gamma_{n,m}$ is called a subgraph of $G \in \Gamma_{n,k}$ 
(notation $H \subseteq G$) if it can be obtained from $G$ by deletion of 
several edges. (When one deletes the edge number $s$ from the graph, the 
numbers of the remaining edges are preserved if they are less than $s$ and 
are lowered by $1$ if they are greater than $s$.) 

For convenience denote by $e(G)$ the number of edges of the graph $G$ (so 
$e(G) = k$ if $G \in \Gamma_{n,k}$. Proof of Theorem \ref{Th:Main} involves 
the following well-known lemma:

 \begin{lemma}[Moebius inversion formula, \cite{Rota}] \label{Lm:Moeb}
Let $f:\bigcup_k \Gamma_{n,k} \to \Complex$ be a function on the set of 
graphs with $n$ vertices, and let the function $h$ on the same set be 
defined by the equality $h(G) = \sum_{H \subseteq G} f(H)$ for every $G \in 
\Gamma_{n,k}$. Then one has $(-1)^{e(G)} f(G) = \sum_{H \subseteq G} 
(-1)^{e(H)} h(H)$.
 \end{lemma}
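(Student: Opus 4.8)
The plan is to reduce the statement to the classical Möbius inversion on a Boolean lattice. Fix $G \in \Gamma_{n,k}$ and let $E = \{1 \DT, k\}$ be the set of indices of its edges. Every subgraph $H \subseteq G$ is obtained by retaining the edges with indices in some subset $S \subseteq E$ (and renumbering them accordingly), so that the sum $\sum_{H \subseteq G}$ runs over the $2^k$ subsets of $E$ and $e(H) = |S|$. Under this bookkeeping the collection of subgraphs of $G$, ordered by inclusion, is precisely the Boolean lattice $2^E$, and the defining relation $h(H) = \sum_{K \subseteq H} f(K)$ holds for every subgraph $H$, not only for $G$ itself.

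I would then prove the inversion directly rather than quoting the Möbius function. Substituting the definition of $h$ into the right-hand side and exchanging the order of summation gives
\[
\sum_{H \subseteq G} (-1)^{e(H)} h(H) = \sum_{K \subseteq G} f(K) \sum_{K \subseteq H \subseteq G} (-1)^{e(H)}.
\]
For a fixed $K$ with $e(K) = j$, the edges of an intermediate $H$ consist of all $j$ edges of $K$ together with an arbitrary subset of the remaining $k - j$ edges of $G$, so the inner sum equals $(-1)^j \sum_{i=0}^{k-j} \binom{k-j}{i}(-1)^i = (-1)^j (1-1)^{k-j}$, which vanishes unless $j = k$. The only subgraph with $e(K) = k = e(G)$ is $K = G$ itself (every edge deletion strictly lowers the edge count), so the right-hand side collapses to $(-1)^{e(G)} f(G)$, which is the assertion. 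Equivalently, one recognizes $(-1)^{e(H)-e(K)}$ as the value of the Möbius function of $2^E$ and applies Rota's inversion theorem verbatim, then multiplies through by $(-1)^{e(G)}$ and uses $(-1)^{2e(G)} = 1$.

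There is no analytic content here; the only point requiring care is the combinatorial bookkeeping in the first paragraph, namely that ``deletion of several edges'' organizes the subgraphs of $G$ into a genuine Boolean lattice with rank function $e(\cdot)$ — so that distinct edge-subsets are summed separately and the interval $[K, G]$ really is a Boolean lattice of rank $e(G) - e(K)$. Once this is pinned down, the identity $\sum_{i}\binom{m}{i}(-1)^i = [m=0]$ does all the work, and the write-up is essentially the display above together with one sentence fixing the interpretation of $\sum_{H \subseteq G}$.
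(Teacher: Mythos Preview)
Your argument is correct and is the standard direct proof of M\"obius inversion on a Boolean lattice: identify subgraphs of $G$ with subsets of its edge index set, substitute, swap the order of summation, and kill the inner sum with $(1-1)^{k-j}$. The only delicate point you flag --- that the renumbering convention for deleted edges still puts subgraphs in bijection with subsets of $\{1,\dots,k\}$ --- is real but harmless, and you handle it.

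As for comparison: the paper does not actually prove this lemma. It is stated as a ``well-known'' result with a citation to Rota and then used as a black box in the proof of Theorem~\ref{Th:Main}. So there is nothing to compare your approach against; your write-up simply supplies what the paper omits.
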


 \begin{proof}[of Theorem \ref{Th:Main}]
Let $\Delta {\mathcal B}_{n,k}(q,y,z) = \sum_{G \in \Gamma_{n,k}} x_G G$; 
by the definition of the Laplace operator $x_G \ne 0$ only if $G$ contains 
no loops. For a graph $H \in \Gamma_{n,k}$ the element $\Delta H \in 
\Graph_{n,k}$ contains a term $y_{G,H} G$ with $y_{G,H} \ne 0$ if and only 
if $\Phi \bydef \widehat H$ (the graph $H$ with all the loops deleted) is a 
subgraph of $G$. For any subgraph $\Phi \subseteq G$ of a loopless graph 
$G$ there exists exactly one $H \bydef L(\Phi)$ such that $\Phi = 
\widehat{H}$: every edge $[ab]$ present in $G$ but missing in $\Phi$ is 
replaced by the loop $[aa]$ in $H$. 

Eventually, the coefficient $y_{G,H}$ in this case is 
 %*
 \begin{equation*}
y_{G,H} = (-1)^{\#\text{of loops in $H$}} B_H(q,y,z) = (-1)^{k - 
e(\Phi)}B_{\Phi}(q,y,z).
 \end{equation*}
 %*
where $\Phi = \widehat{H}$. 

By \cite[Eq.~(21)]{Bernardi} one has $\sum_{\Phi \subseteq G} 
[B_\Phi]_{e(\Phi)}(q,y-1,z-1) = B_G(q,y,z)$. Applying the Moebius inversion 
formula (Proposition \ref{Lm:Moeb}) to this identity one obtains
 %*
 \begin{equation*}
x_G = \sum_{\Phi \subseteq G} y_{G,L(\Phi)}= (-1)^k \sum_{\Phi \subseteq 
G} (-1)^{e(\Phi)} B_\Phi(q,y,z) = [B_G]_k(q,y-1,z-1).
 \end{equation*}
 %*
 \end{proof}

 \begin{proof}[of Theorem \ref{Th:MainUndir}]
is similar to that of Theorem \ref{Th:Main}: again, if $\Delta {\mathcal 
Z}_{n,k}(q,v) = \sum_G x_G G$ then $G$ entering the sum have no loops. A 
contribution $y_{G,H}$ of a graph $H$ that into $x_G$ is nonzero if and 
only if $\Phi = \widehat{H}$ is a subgraph of $G$. For a subgraph $\Phi 
\subseteq G$ having $e(\Phi)$ edges there are $2^{k-e(\Phi)}$ graphs $H$ 
such that $\Phi = \widehat{H}$: every edge $[ab]$ present in $G$ but 
missing in $\Phi$ may correspond either to a loop $[aa]$ or to a loop 
$[bb]$ in $H$; recall that $a \ne b$ because $G$ is loopless. 

The contribution $y_{G,H}$ of all such graphs $H$ into $x_G$ is the same 
and is equal to $(-1)^{k-e(\Phi)} Z_\Phi(q,v)$.  Now by \cite{Sokal} one 
has $Z_G(q,v) = \sum_{H \subseteq G} q^{\beta_0(H)} v^{e(H)}$, and therefore
 %*
 \begin{align*}
x_G &= \sum_{\Phi \subseteq G} 2^{k-e(\Phi)} (-1)^{k-e(\Phi)} Z_\Phi(q,v) =
(-2)^k \sum_{\Phi \subseteq G} \left(-\frac{1}{2}\right)^{e(\Phi)} 
Z_\Phi(q,v)\\
&= (-2)^k \sum_{\Psi \subseteq \Phi \subseteq G} \left(-\frac{1}{2}\right)^{e(\Phi)} 
q^{\beta_0(\Psi)} v^{e(\Psi)} = (-2)^k \sum_{\Psi \subseteq G} q^{\beta_0(\Psi)} 
v^{e(\Psi)} \sum_{\Phi \supseteq \Psi} \left(-\frac{1}{2}\right)^{e(\Phi)} 
\\ &= (-2)^k \sum_{\Psi \subseteq G} q^{\beta_0(\Psi)} v^{e(\Psi)} 
\left(-\frac{1}{2}\right)^{e(\Psi)} \bigl(1 - \frac{1}{2}\bigr)^{k-e(\Psi)}
= (-1)^k Z_G(q,-v).
 \end{align*}
 %*
 \end{proof}

\end{document}